\theoremstyle{plain}
\newtheorem{theorem}{\bf Theorem}[section]
\newtheorem{corollary}[theorem]{\bf Corollary}
\theoremstyle{definition}
\newcommand{\N}{\mathbb N}
\newcommand{\Z}{\mathbb Z}
 \DeclareMathOperator{\ord}{ord}
\DeclareMathOperator{\End}{End} \DeclareMathOperator{\Min}{Min}
\renewcommand{\time}{\negthinspace \times \negthinspace}
\newcommand{\red}{{\text{\rm red}}}
\renewcommand{\t}{\, | \,}
\newcommand{\ito}{\overset\sim\longrightarrow}
\numberwithin{equation}{section}
\begin{document}

\title{A realization theorem for sets of distances}

\address{Institute for Mathematics and Scientific Computing\\ University of Graz, NAWI Graz\\ Heinrichstra{\ss}e 36\\ 8010 Graz, Austria }
\email{alfred.geroldinger@uni-graz.at}
\urladdr{http://imsc.uni-graz.at/geroldinger}

\address{Universit{\'e} Paris 13 \\ Sorbonne Paris Cit{\'e} \\ LAGA, CNRS, UMR 7539,  Universit{\'e} Paris 8\\ F-93430, Villetaneuse, France \\ and \\ Laboratoire Analyse, G{\'e}om{\'e}trie et Applications (LAGA, UMR 7539) \\ COMUE  Universit{\'e} Paris Lumi{\`e}res \\  Universit{\'e} Paris 8, CNRS \\  93526 Saint-Denis cedex, France} \email{schmid@math.univ-paris13.fr}

\author{Alfred Geroldinger  and Wolfgang A. Schmid}

\thanks{This work was supported by
the Austrian Science Fund FWF, Project Number P26036-N26 and by the ANR Project Caesar, Project Number ANR-12-BS01-0011}

\keywords{Krull monoids,   sets of lengths, sets of distances}

\subjclass[2010]{13A05, 13F05, 20M13}

\begin{abstract}
Let $H$ be an atomic monoid. The set of distances $\Delta (H)$ of $H$ is the set of all $d \in \N$ with the following property: there are irreducible elements $u_1, \ldots, u_k, v_1 \ldots, v_{k+d}$ such that $u_1 \cdot \ldots \cdot u_k=v_1 \cdot \ldots \cdot v_{k+d}$ but $u_1 \cdot \ldots \cdot u_k$ cannot be written as a product of $\ell$ irreducible elements for any $\ell \in \N$ with $k<\ell<k+d$. It is well-known (and easy to show) that, if $\Delta (H)$ is nonempty, then   $\min \Delta (H) = \gcd \Delta (H)$. In this paper we show conversely that for every finite nonempty set $\Delta \subset \N$ with $\min \Delta = \gcd \Delta$ there is a finitely generated Krull monoid $H$ such that $\Delta (H)=\Delta$.
\end{abstract}

\maketitle

\bigskip
\section{Introduction} \label{1}
\bigskip

Sets of lengths (together with all invariants describing their structure, such as sets of distances and elasticities) are a well-studied means of describing the arithmetic structure of non-factorial monoids and domains. The first goal is to describe the arithmetical invariants in terms of algebraic invariants of the underlying structure. Then the question arises to which extent the achieved results are best possible, and answers can be given by providing monoids and domains with prescribed arithmetical invariants. For a sample of such realization results for sets of lengths, sets of elasticities, sets of catenary degrees we refer to \cite{B-C-C-K-W06, Fa-Ge17a, N-P-T-W16a,   Sc09a} and to various survey articles in \cite{C-F-G-O16}.

In the present paper we focus on  sets of distances, also called delta sets in the literature.  The set of distances $\Delta (H)$ of an atomic monoid $H$ is the set of all $d \in \N$ with the following property: there are atoms (irreducible elements) $u_1, \ldots, u_k, v_1 \ldots, v_{k+d}$ such that $u_1 \cdot \ldots \cdot u_k=v_1 \cdot \ldots \cdot v_{k+d}$ but $u_1 \cdot \ldots \cdot u_k$ cannot be written as a product of $\ell$ atoms for any $\ell \in \N$ with $k<\ell<k+d$. The monoid $H$ is called half-factorial if $\Delta (H)=\emptyset$ (clearly, factorial monoids are half-factorial). If $H$ is not half-factorial, then a simple argument shows that $\min \Delta (H) = \gcd \Delta (H)$ (\cite[Proposition 1.4.4]{Ge-HK06a}). Sets of distances are finite for transfer Krull monoids of finite type (hence in particular for Krull domains with finite class group),  weakly Krull domains with finite $v$-class group, finitely generated monoids, and others (see  \cite[Theorem 13]{Ge16c}, \cite[Theorems 3.1.4 and 3.7.1]{Ge-HK06a}, \cite{Ka16b}). The question which finite sets can actually occur as a set of distances (of any monoid or domain) was open so far. It is easy to see that every singleton can be realized as a set of distances (we recall the argument in the proof of Theorem \ref{1.1}).  One of the very few results beyond this is given in a recent  paper where it is shown that every set $\Delta$ with $|\Delta|=2$ and $\min \Delta = \gcd \Delta$ can be realized as the set of distances of a numerical monoid (\cite{Co-Ka16a}).

The arithmetic structure of a Krull monoid is completely determined by its class group and the distribution of prime divisors in the classes. Let $H$ be a Krull monoid such that every class contains a prime divisor. If the class group is finite (e.g., for rings of integers in algebraic number fields), then $\Delta (H)$ is a finite interval and if $G$ is infinite, then $\Delta (H)=\N$ (\cite{Ge-Yu12b}). The assumption on the prime divisors is crucial. Indeed,  in contrast to the above result we show in the present paper that every finite nonempty set $\Delta$ with $\min \Delta = \gcd \Delta$ can occur as the set of distances of a finitely generated Krull monoid.

\newpage
\medskip
\begin{theorem} \label{1.1}
Let $\Delta \subset \N$ be a finite nonempty set of positive integers with $\min \Delta = \gcd \Delta$. Then there is a finitely generated Krull monoid $H$ such that $\Delta (H)=\Delta$.
\end{theorem}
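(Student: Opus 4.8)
The plan is to realize $\Delta$ as the set of distances of a monoid of zero-sum sequences and then to invoke the standard fact that such monoids are finitely generated Krull monoids. Recall that for a finite subset $G_0$ of an abelian group $G$, the monoid $\mathcal{B}(G_0)$ of zero-sum sequences over $G_0$ — equivalently, the saturated submonoid $\{(x_g)_{g\in G_0}\in \N_0^{G_0}\colon \sum_{g\in G_0} x_g\, g = 0\}$ of a free abelian monoid — is a finitely generated reduced Krull monoid. So it suffices to produce an abelian group $G$ and a finite subset $G_0\subseteq G$ with $\Delta(\mathcal{B}(G_0))=\Delta$. Since $\min\Delta=\gcd\Delta$, the integer $d:=\min\Delta$ divides every element of $\Delta$; write $\Delta=\{d=d_1<d_2<\dots<d_m\}$.

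The target behaviour I would aim for is a $G_0$ admitting one distinguished zero-sum sequence $x\in\mathcal{B}(G_0)$ whose set of lengths is the ``staircase''
\[
\mathsf{L}(x)=\bigl\{\,t,\ t+d_1,\ t+d_1+d_2,\ \dots,\ t+(d_1+d_2+\dots+d_m)\,\bigr\}
\]
for a suitable $t\in\N$. This single set already has $\Delta(\mathsf{L}(x))=\Delta$, since its consecutive differences are exactly $d_1,d_2,\dots,d_m$. The factorizations of $x$ realizing these lengths should arise from a telescoping chain of local exchanges among the atoms dividing $x$, the $j$-th exchange changing the factorization length by $d_j$; here the divisibility $d_1\mid d_j$ is what lets one set the chain up so that performing one exchange keeps the remaining ones available. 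To force $\mathsf{L}(x)$ to be exactly this finite staircase rather than a longer arithmetic progression, I would build into $G_0$ a ``scarce'' element whose total multiplicity in $x$ bounds how many, and which, exchanges can be carried out.

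The inclusion $\Delta\subseteq\Delta(\mathcal{B}(G_0))$ is then immediate from the staircase. The real work, and the step I expect to be the main obstacle, is the reverse inclusion $\Delta(\mathcal{B}(G_0))\subseteq\Delta$: one must check that the set of lengths of \emph{every} zero-sum sequence over $G_0$ has all of its consecutive differences in $\Delta$. For sequences lying outside the ``scarce-resource'' regime one expects the sets of lengths to be full arithmetic progressions of difference $d_1$ — and here again $d_1\mid d_j$ is essential, so that refining or recombining factorizations never creates a gap that is smaller than $d_1$ or otherwise not in $\Delta$; for the finitely many sequences in the critical regime one needs a direct combinatorial analysis of all their factorizations, in particular ruling out spurious distances coming from the higher powers $x^\ell$ and from products of $x$ with other atoms. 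Managing this case analysis — which is where the fine arithmetic of the chosen $G_0$ (congruence and coprimality conditions among the orders and coefficients involved) has to be exploited — is the technical heart of the argument.
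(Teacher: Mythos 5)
Your plan identifies the right target but stops short of a proof exactly where the difficulty begins. You propose to find a finite $G_0$ with $\Delta(\mathcal B(G_0))=\Delta$ by engineering a single ``staircase'' element $x$ with $\mathsf L(x)=\{t,\,t+d_1,\,t+d_1+d_2,\,\dots\}$ and then controlling the distances of every other zero-sum sequence over $G_0$. But you never exhibit such a $G_0$, and the reverse inclusion $\Delta(\mathcal B(G_0))\subseteq\Delta$ --- which you yourself flag as ``the technical heart'' --- is left entirely open. That inclusion is precisely where naive constructions break down: as the paper notes explicitly, even a direct product of two monoids whose distance sets are $\Delta_1$ and $\Delta_2$ generally has $\Delta(H_1\times H_2)\supsetneq \Delta_1\cup\Delta_2$, because new distances appear from factorizations of mixed products. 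Your staircase element would face the same problem: the higher powers $x^\ell$, and products of $x$ with other atoms, typically produce full arithmetic progressions of difference $d_1$ that fill in the staircase gaps, and also new distances not in $\Delta$, unless the ambient $G_0$ is tuned with great care. You gesture at ``congruence and coprimality conditions'' without specifying them, so there is no way to check that the needed cancellations occur. In short, the proposal is an outline of a strategy, not a proof.

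The paper's actual route is quite different and sidesteps the issue you are stuck on. It proceeds by induction on $|\Delta|$: the base case $\Delta=\{d\}$ is realized by $\mathcal B(\{g,-g\})$ with $\operatorname{ord}(g)=d+2$, and in the inductive step one takes monoids $H_1,H_2$ with $\Delta(H_1)=\Delta\setminus\{\max\Delta\}$ and $\Delta(H_2)=\{\max\Delta\}$, forms $H_1\times H_2$, and then \emph{adjoins new generators} $p^{\boldsymbol m}_{\ell,\nu}$ --- one family for each minimal ``bad'' tuple $\boldsymbol m$ and each admissible intermediate length $\ell$ --- whose sole purpose is to provide additional factorizations that fill, in steps of $d_1=\min\Delta$, the otherwise-too-large gaps in $\mathsf L_{H_1\times H_2}$. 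This produces a finitely generated monoid that is not presented as a $\mathcal B(G_0)$ at all; the Krull property is verified separately by checking root-closedness (assertion A3), and the distance computation (assertion A2) is a case analysis over how the new generators can appear in a factorization. So where you try to realize $\Delta$ inside a single combinatorially-defined $\mathcal B(G_0)$, the paper builds an abstract Krull monoid by controlled surgery on a direct product and only afterwards (in Corollary 1.2) passes to $\mathcal B(G_0)$ via the transfer homomorphism. If you want to salvage your approach, you would in effect have to redo the paper's surgery inside a block monoid, which is strictly harder than what the paper does.
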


\medskip
The above realization theorem is not restricted to abstract Krull monoids, but any such set $\Delta$ can be realized as the set of distances of a Dedekind domain or as the set of distances of a monoid of modules. To formulate this precisely, let
$R$ be a ring,  $\mathcal C$ a class of right $R$-modules which is closed under finite direct sums, under direct summands, and under isomorphisms, and suppose that $\mathcal C$ has a set $V (\mathcal C)$ of representatives (this means that every module $M$ in $\mathcal C$ is isomorphic to a unique $[M]$ in $V (\mathcal C))$. Then $V (\mathcal C)$ becomes a commutative semigroup with operation given by $[M] + [N] = [M\oplus N]$, and it encodes all possible information about direct sum decompositions of modules in $\mathcal C$. In particular, the Krull-Remak-Schmidt-Azumaya Theorem holds for $\mathcal C$ if and only if $V (\mathcal C)$ is factorial. This semigroup-theoretical approach to the study of direct-sum decompositions of modules was pushed forward by Facchini and Wiegand, and we refer to the surveys \cite{Ba-Wi13a, Fa12a}.

\medskip
\begin{corollary} \label{1.2}
Let $\Delta \subset \N$ be a finite nonempty set of positive integers with $\min \Delta = \gcd \Delta$.
\begin{enumerate}
\item There is a Dedekind domain $R$ with finitely generated class group such that $\Delta (R)=\Delta$.

\smallskip
\item There is a ring $R$ and a class of right $R$-modules $\mathcal C$ with the above properties such that for the monoid of modules $V ( \mathcal C)$ we have $\Delta \big( V (\mathcal C) \big) = \Delta$.
\end{enumerate}
\end{corollary}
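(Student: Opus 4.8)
To prove Theorem~\ref{1.1} — Corollary~\ref{1.2} will then follow from known realization results — I would pass to monoids of zero-sum sequences. Since the monoid $\mathcal B(G_0)$ of zero-sum sequences over a finite subset $G_0$ of a finitely generated abelian group is a reduced finitely generated Krull monoid (it is a saturated submonoid of a free abelian monoid, namely the kernel of $\mathcal F(G_0)\to\langle G_0\rangle$, and finite generation follows from Dickson's Lemma applied to its set of atoms, the minimal zero-sum sequences over $G_0$), it suffices to exhibit a finite subset $G_0$ of a direct sum of finitely many cyclic groups with $\Delta(\mathcal B(G_0))=\Delta$. Write $\Delta=\{d_1,\dots,d_k\}$ with $d_1<\dots<d_k$; the hypothesis $\min\Delta=\gcd\Delta$ says $d_1\mid d_j$ for every $j$, and I expect to use this throughout.

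The case $k=1$ is the elementary computation recalled in the paper: for a cyclic group $\langle g\rangle$ of order $m\ge 3$ one has $\Delta(\mathcal B(\{g,-g\}))=\{m-2\}$, since the only source of non-uniqueness is $g^m(-g)^m=(g(-g))^m$, which forces every set of lengths to be an arithmetic progression with difference $m-2$; taking $m=d_1+2$ realizes an arbitrary singleton (here $g^m(-g)^m$ has length set $\{2,2+d_1\}$). For general $k$ the plan is to build a \emph{single} subset $G_0$ containing several blocks of the form $\mathcal B(\{g,-g\})$, but glued together so that $\mathcal B(G_0)$ does \emph{not} decompose as a direct product: a direct product would be fatal, because the sumset of two short arithmetic progressions with differences $d_i\mid d_j$ and $d_j>2d_i$ has a gap of size $d_j-d_i$, typically not in $\Delta$, so the blocks must genuinely interact. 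Concretely I would look for $G_0$ together with a distinguished $W\in\mathcal B(G_0)$ whose set of lengths is, by design, of the form $\{\ell_0<\ell_1<\dots<\ell_k\}$ with consecutive differences running exactly through $d_1,\dots,d_k$, so that $\Delta(\mathsf L(W))=\Delta$; and I would try to equip $\mathcal B(G_0)$ with a length weight, exploiting $d_1\mid d_j$, that forces every set of lengths into a single residue class modulo $d_1$, so that automatically $\Delta(\mathcal B(G_0))\subseteq d_1\N$. The design problem is then to choose the blocks and the gluing so that the only attainable jumps between consecutive factorization lengths — over all of $\mathcal B(G_0)$, not just for $W$ — are precisely $d_1,\dots,d_k$.

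The main obstacle is the inclusion $\Delta(\mathcal B(G_0))\subseteq\Delta$, i.e. ruling out spurious distances. This needs a complete description of the atoms of $\mathcal B(G_0)$ and of the relations among them — the combinatorics of minimal zero-sum sequences over the chosen $G_0$ — followed by a case analysis showing that no element has two consecutive factorization lengths differing by a value outside $\Delta$. I expect the delicate cases to be the powers $W^n$ and the products of $W$ with the remaining atoms, where the divisibilities $d_1\mid d_j$ (together with the weight, if available) must be exactly what prevents a gap such as $2d_1$ from occurring when $2d_1\notin\Delta$; one must also check that no distance exceeds $d_k$. The reverse inclusion $\Delta\subseteq\Delta(\mathcal B(G_0))$ is the easy direction, witnessed by $W$ and its powers once $\mathsf L(W)$ is known. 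Should a single $G_0$ turn out to be too rigid, a fallback is induction on $k$, enlarging a set realizing $\{d_1,\dots,d_{k-1}\}$ by one further glued block, but the verification that no new distances appear remains the crux either way.

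Finally, Corollary~\ref{1.2} follows by transporting the monoid $H=\mathcal B(G_0)$ of Theorem~\ref{1.1} along known realization results, using that sets of distances are invariant under monoid isomorphisms and, more generally, under transfer homomorphisms. For part~(1): $G_0$ is finite, so $\langle G_0\rangle$ is a finitely generated abelian group, and by the realization theory for Dedekind domains (see \cite{Ge-HK06a}) there is a Dedekind domain $R$ with $\Pic(R)\cong\langle G_0\rangle$ whose classes containing a nonzero prime ideal are exactly those in $G_0$; the associated block homomorphism $R^{\bullet}\to\mathcal B(G_0)$ is then a transfer homomorphism, so $\Delta(R)=\Delta(\mathcal B(G_0))=\Delta$ while $\Pic(R)$ is finitely generated. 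For part~(2): by the realization results in the theory of direct-sum decompositions of modules (see \cite{Ba-Wi13a, Fa12a}), the reduced finitely generated Krull monoid $\mathcal B(G_0)$ is isomorphic to a monoid of modules $V(\mathcal C)$ for a suitable ring $R$ and class $\mathcal C$, whence $\Delta(V(\mathcal C))=\Delta$.
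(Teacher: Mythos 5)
Your derivation of Corollary~\ref{1.2} from Theorem~\ref{1.1} uses exactly the same two tools the paper uses: Claborn's realization theorem for Dedekind domains in part~(1), and the Facchini--Wiegand realization theorem for monoids of modules in part~(2), together with the invariance of $\Delta$ under transfer homomorphisms. One caveat, though: you write the monoid supplied by Theorem~\ref{1.1} as $H=\mathcal B(G_0)$, but the theorem as stated gives only a finitely generated Krull monoid, not a block monoid, and the paper's proof of Theorem~\ref{1.1} in fact constructs an $H$ that is not of this form. Taking the theorem at face value, you therefore need the intermediate step the paper inserts before Claborn: the transfer homomorphism $\boldsymbol\beta\colon H\to\mathcal B(G_0)$ onto the block monoid over the set $G_0$ of classes of $H$ containing prime divisors, furnished by \cite[Theorem 3.4.10]{Ge-HK06a}, which gives $\Delta(H)=\Delta(\mathcal B(G_0))$ with $\langle G_0\rangle$ finitely generated. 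For part~(2) this caveat is harmless, since Facchini--Wiegand applies to $H$ directly (it is a reduced Krull monoid). The bulk of your text proposes an alternative block-monoid proof of Theorem~\ref{1.1} that would indeed make $H=\mathcal B(G_0)$ literal, but you leave the hard direction $\Delta(\mathcal B(G_0))\subseteq\Delta$ entirely open, so as written that sketch does not discharge the corollary's dependence on the paper's Theorem~\ref{1.1}.
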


\smallskip
We gather some background and fix notation in Section \ref{2}. The proofs of Theorem \ref{1.1} and of Corollary \ref{1.2} are given in Section \ref{3}.

\bigskip
\section{Background on Krull monoids and factorizations} \label{2}
\bigskip

By a {\it monoid}, we mean a commutative semigroup which has a unit element and which satisfies the cancelation laws. For any set $P$, let $\mathcal F (P)$ be the free abelian monoid with  basis $P$ and for an element
\[
a = p_1 \cdot \ldots \cdot p_{\ell} = \prod_{p \in P} p^{\mathsf v_p (a)} \in \mathcal F (P) , \quad \text{where} \quad \ell \in \N_0 \ \text{and} \ p_1, \ldots, p_{\ell} \in P \,,
\]
we denote by $|a|= \ell = \sum_{p \in P} \mathsf v_p (a) \in \N_0$ the length of $a$.

Let $H$ be a monoid. We denote by $H^{\times}$ the group of units, by $\mathsf q (H)$ the quotient group of $H$, and by $\mathcal A (H)$ the set of atoms of $H$. The monoid $H$ is called reduced if $H^{\times} = \{1\}$, and we denote by $H_{\red}=H/H^{\times}$ the associated reduced monoid of $H$. We say that $H$ is
\begin{itemize}
\item {\it atomic} if every nonunit can be written as a finite product of atoms;

\item {\it root-closed} if $x \in \mathsf q (H)$ and $x^m \in H$ for some $m \in \N$ implies that $x \in H$.
\end{itemize}
Let $\mathsf Z (H) = \mathcal F (\mathcal A (H_{\red}))$ be the factorization monoid of $H$ and let $\pi_H \colon \mathsf Z (H) \to H_{\red}$ be the canonical epimorphism. For $a \in H$, we denote by
\begin{itemize}
\item $\mathsf Z_H (a) = \pi_H^{-1} (aH^{\times}) \subset \mathsf Z (H)$ the {\it set of factorizations} of $a$, and by

\item $\mathsf L_H (a) = \{ |z| \colon z \in \mathsf Z_H (a) \} \subset \N_0$ the {\it set of lengths} of $a$.
\end{itemize}
Thus $H$ is atomic if and only if $\mathsf Z_H (a) \ne \emptyset$ for all $a \in H$.
A monoid $H$ is a {\it Krull monoid} if it satisfies one of the following equivalent conditions (\cite[Chapter 2]{Ge-HK06a}){\rm \,:}
\begin{itemize}
\item[(a)] $H$ is completely integrally closed and satisfies the ascending chain condition on divisorial ideals.

\item[(b)] There is a free abelian monoid $F$ and a homomorphism $\varphi \colon H \to F$ such that for all $a, b \in H$ we have that $a \t b$ in $H$ if and only if $\varphi (a) \t \varphi (b)$ in $F$.
\end{itemize}
Every Krull monoid is atomic, its sets of lengths are all finite, and since it is completely integrally closed, it is root-closed.
Let $R$ be a domain. Then its multiplicative semigroup of nonzero elements $R^{\bullet} = R \setminus \{0\}$ is a monoid, and $R$ is a Krull domain if and only if $R^{\bullet}$ is a Krull monoid. Thus Property (a) reveals that every integrally closed noetherian domain is a Krull domain.

We briefly discuss a Krull monoid having a combinatorial flavor which plays a crucial role in all arithmetic studies of Krull monoids and which we will need in the proofs of Theorem \ref{1.1} and of Corollary \ref{1.2}. Let $G$ be an additively written abelian group and let $G_0 \subset G$ be a subset. For an  element $S = g_1 \cdot \ldots \cdot g_{\ell} = \prod_{g \in G_0} g^{\mathsf v_g (S)} \in \mathcal F (G_0)$ let
\[
\sigma (S) = \sum_{i=1}^{\ell}g_i = \sum_{g \in G_0} \mathsf v_g (S) g \in G \quad \text{denote  its {\it sum}} \,.
\]
Then $\mathcal B (G_0) = \{ S \in \mathcal F (G_0) \colon \sigma (S)=0 \}$ is a submonoid of $\mathcal F (G_0)$, called the {\it monoid of zero-sum sequences} over $G_0$.  Since the inclusion $\mathcal B (G_0) \hookrightarrow \mathcal F (G_0)$ satisfies Property (b), $\mathcal B (G_0)$ is a Krull monoid.
We refer the reader to the monographs \cite{Ge-HK06a, HK98} for  detailed expositions on Krull monoids.

\bigskip
\section{Proof of the Main Results} \label{3}
\bigskip

Before starting the proof of Theorem \ref{1.1}, we recall some basic facts that are needed in the proof. A submonoid $H' \subset H$ of a monoid $H$ is  called divisor-closed if $a\in H'$ and $b \in H $ with $b \mid  a $ (in $H$) implies that $b \in H'$. If $H' \subset H$ is a divisor-closed submonoid, then for each $a \in H'$ one has $\mathsf{Z}_{H'}(a)= \mathsf{Z}_{H}(a)$, in particular $\mathsf{L}_{H'}(a)= \mathsf{L}_{H}(a)$ and $\Delta(H') \subset \Delta(H)$.

If $H = H_1 \times H_2$ is the product of two reduced atomic monoids $H_1, H_2$, then $H_1$ and $H_2$ are divisor-closed submonoids whence
$\Delta(H_1) \cup \Delta(H_2) \subset \Delta (H)$. In general, this inclusion is strict. This is why a direct product construction (as used in a realization result for sets of catenary degrees and others, \cite[Proposition 3.2]{Fa-Ge17a}) cannot be used to construct arbitrary sets of distances.

\begin{proof}[Proof of Theorem \ref{1.1}]
Let $\Delta \subset \N$ be a finite subset with $\min \Delta = \gcd \Delta$. We proceed by induction on $|\Delta|$.
Suppose that $\Delta = \{d\}$. Let $G$ be an additive abelian group having an element  $g \in G$ with $\ord (g)=d+2$, and let $H = \mathcal B ( \{g, -g\})$. We set $v_1 = g^{d+2}$, $v_2=(-g)^{d+2}$, and $v_3=(-g)g$. Then $\mathcal A (\mathcal B (G_0))= \{v_1, v_2, v_3\}$, $v_1v_2=v_3^{d+2}$, and $H$ is a reduced finitely generated Krull monoid with $\Delta (H) = \{d\}$.

Now suppose that $|\Delta|>1$ and that the assertion holds for all sets under consideration which are strictly smaller than $|\Delta|$. We set $d_1 = \min \Delta$, $d_2 = \max \Delta$, $\Delta_1 = \Delta \setminus \{d_2\}$, and $\Delta_2 = \{d_2\}$. Then $\min \Delta_1 = \gcd \Delta_1 = d_1$ and $\min \Delta_2 = \gcd \Delta_2$. Thus by the induction hypothesis there are finitely generated Krull monoids $H_1$ and $H_2$ such that $\Delta (H_i) = \Delta_i$ for $i \in [1,2]$. Without restriction we may suppose that $H_1$ and $H_2$ are reduced and that $\mathcal A (H_2) = \{v_1, v_2, v_{3} \}$ with $v_1v_2=v_3^{d_2+2}$ as constructed for the case $|\Delta|=1$.
We set $\mathcal A (H_1) = \{u_1, \ldots, u_k\}$ and observe that $\mathcal A (H_1 \time H_2) = \mathcal A (H_1) \uplus \mathcal A (H_2) = \{u_1, \ldots, u_k, v_1, v_2,v_3\}$. We define
\[
\begin{aligned}
\pi \colon \N_0^{k+3} & \ito \qquad \mathsf Z (H_1\time H_2) & \longrightarrow H_1 \time H_2 \\
\boldsymbol m & \mapsto z = u_1^{m_1} \cdot \ldots \cdot u_k^{m_k} v_1^{n_1}v_2^{n_2}v_3^{n_3} & \mapsto \pi_{H_1 \time H_2} (z)
\end{aligned}
\]
and
\[
\begin{aligned}
\Omega = \{ \boldsymbol m = (m_1, \ldots, m_k, n_1,n_2,n_3) \in \N_0^{k+3} \colon &   (n_1,n_2,n_3) \ne (0,0,0), (m_1, \ldots, m_k) \ne (0, \ldots, 0), \ \text{and} \\
 & |\boldsymbol m| < \max  \mathsf L_{H_1\time H_2} ( \pi (\boldsymbol m) ) - d_1 \}
\end{aligned}
\]
We note that this set is in fact nonempty, as for example $\mathsf L_{H_1\time H_2}(u_1 v_1v_2)= \{3, 3 + d_2\}$.
The set of minimal elements  $\Min (\Omega) \subset \Omega$ (with respect to the usual partial order) is  finite by Dickson's  Theorem (\cite[Theorem 1.5.3]{Ge-HK06a}).  For each $\boldsymbol m \in \Min (\Omega)$ and each
\[
\ell \in [|{\boldsymbol m}|+d_1, \max \mathsf L_{H_1\time H_2} (\pi (\boldsymbol m) ) - d_1] \cap (|{\boldsymbol m}| + d_1\N)
\]
we consider a set
$P_{\ell}^{\boldsymbol m} = \{p^{\boldsymbol m}_{\ell,2}, \ldots, p^{\boldsymbol m}_{\ell, \ell}\}$.
We define $\Omega' \subset \Min (\Omega) \time \mathbb{N}$ to be the set of all  $(\boldsymbol m, \ell)$ where $\boldsymbol m \in \Min (\Omega)$ and $\ell$ as above, and note that $ \Omega'$ is finite too.
For each  $(\boldsymbol m, \ell) \in \Omega'$, we define
\[
p^{\boldsymbol m}_{\ell,1} = \pi (\boldsymbol m)(p^{\boldsymbol m}_{\ell, 2} \cdot \ldots \cdot p^{\boldsymbol m}_{\ell, \ell})^{-1} \in \mathsf q (H_1 \time H_2) \times  \prod_{(\boldsymbol m, \ell) \in \Omega'} \mathsf q (\mathcal F (P^{\boldsymbol m}_{\ell})) \,,
\]
and set $P_{\ell}^{\boldsymbol m, \ast} = \{p^{\boldsymbol m}_{\ell,1}, \ldots, p^{\boldsymbol m}_{\ell, \ell}\}$.
Let
\[
H  \subset \mathsf q (H_1 \time H_2) \times  \prod_{(\boldsymbol m, \ell) \in \Omega'} \mathsf q (\mathcal F (P^{\boldsymbol m}_{\ell}))
\]
be the submonoid generated by $A = \mathcal A (H_1 \time H_2) \uplus \biguplus_{(\boldsymbol m, \ell) \in \Omega'} P_{\ell}^{\boldsymbol m, \ast}$. Thus $H$ is a reduced finitely generated monoid with quotient group $\mathsf q (H) = \mathsf q (H_1 \time H_2) \times  \prod_{(\boldsymbol m, \ell) \in \Omega'} \mathsf q (\mathcal F (P^{\boldsymbol m}_{\ell}))$.  Since $A$ is a minimal generating set (with respect to inclusion), it follows by \cite[Proposition 1.1.7]{Ge-HK06a} that $A$ is the set of atoms of $H$.
If $i \in [1,2]$, then $H_i \subset H$ is a divisor-closed submonoid and hence if $a \in H_i$, then $\mathsf Z_{H_i} (a) = \mathsf Z_H (a)$ and thus $\mathsf L_{H_i} (a) = \mathsf L_H (a)$.
Clearly, $H_1\time H_2 \subset H$ is not divisor-closed, and  hence if $a \in H_1 \time H_2 \setminus (H_1 \cup H_2)$, then $\mathsf L_H (a)$ and $\mathsf L_{H_1\time H_2}(a)$ need not  be equal. Consider a product of the form
\[
a= \prod_{\nu=1}^k u_{\nu}^{r_{\nu}} \prod_{\nu=1}^3v_{\nu}^{s_{\nu}} \prod_{(\boldsymbol m, \ell) \in \Omega'} \prod_{\nu=1}^{\ell} (p_{\ell, \nu}^{\boldsymbol m})^{t_{\boldsymbol m, \ell, \nu}} \,,
\]
where all exponents $r_{\nu}, s_{\nu}$, and $t_{\boldsymbol m, \ell, \nu}$ are non-negative integers. Then, by construction, we have
\begin{equation} \label{characterization}
a \in H_1 \time H_2 \quad \text{ if and only if } \quad  t_{\boldsymbol m, \ell, 1} = \ldots = t_{\boldsymbol m, \ell, \ell} \ \text{ for all} \  (\boldsymbol m, \ell) \in \Omega' \,.
\end{equation}

In the remainder of the proof we show the following three assertions. The first one verifies a condition that is important for technical reasons and the latter two then establish what we intended to show.
\begin{enumerate}
\item[{\bf A1.}\,] If $a_1 \in H_1$ and $a_2 \in H_2$, then $\max \mathsf L_H (a_1a_2) = \max \mathsf L_{H_1 \time H_2} (a_1a_2)$.

\smallskip

\item[{\bf A2.}\,] $\Delta (H)=\Delta$.

\smallskip

\item[{\bf A3.}\,] $H$ is a Krull monoid.
\end{enumerate}

\medskip
\noindent
{\it Proof of \,{\bf A1}}.\, Let $a_1 \in H_1$, $a_2 \in H_2$, and $z\in \mathsf{Z}_H(a_1a_2)$ with $|z|=\max \mathsf L_H (a_1a_2)$. We need to show that there exists some  $z'\in \mathsf{Z}_{H_1 \times H_2}(a_1a_2)$ such that $|z|= |z'|$.
Suppose that
\[
z= \prod_{\nu=1}^k u_{\nu}^{r_{\nu}} \prod_{\nu=1}^3v_{\nu}^{s_{\nu}} \prod_{(\boldsymbol m, \ell) \in \Omega'} \prod_{\nu=1}^{\ell} (p_{\ell, \nu}^{\boldsymbol m})^{t_{\boldsymbol m, \ell, \nu}} \,,
\]
where all exponents $r_{\nu}, s_{\nu}$, and $t_{\boldsymbol m, \ell, \nu}$ are non-negative integers.
If all $t_{\boldsymbol m, \ell, \nu}$ are equal to $0$, then $z \in \mathsf{Z}_{H_1 \times H_2}(a_1a_2)$, and the claim holds with $z'=z$.
Assume to the contrary that there are $(\boldsymbol m', \ell') \in \Omega'$ and $\nu' \in [1, \ell']$ such that $t_{\boldsymbol m', \ell', \nu'} > 0$.
Since we consider a factorization of the element  $a_1a_2  \in H_1 \time H_2$, we infer by \eqref{characterization} that
$t_{\boldsymbol m', \ell', \nu'} = t_{\boldsymbol m', \ell', 1} = \ldots = t_{\boldsymbol m', \ell', \ell'}$.
By definition of $\Omega'$ we have $\max \mathsf{L}_{H_1 \time H_2}( \pi(\boldsymbol m')) \ge \ell' +d_1$.  If $y \in \mathsf{Z}_{H_1 \time H_2}( \pi(\boldsymbol m'))$ is a factorization of maximal length, then
\[
 \prod_{\nu=1}^{\ell'} (p_{\ell', \nu}^{\boldsymbol m'})^{-1}\, z \, y  \in \mathsf Z_H (a_1a_2)
\]
is a factorization of $a_1a_2$ of length greater than or equal to $|z|+d_1$, a contradiction to $|z| = \max \mathsf L_H (a_1a_2)$.

\medskip
\noindent
{\it Proof of \,{\bf A2}}.\,
Since $H$ is finitely generated, \cite[Theorem 3.1.4]{Ge-HK06a} implies that $\Delta (H)$ is finite and by construction we have $\Delta (H) \subset d_1 \N$.
Since, for $i \in [1,2]$, $H_i \subset H$ is a divisor-closed submonoid, it follows that $\Delta_i = \Delta (H_i) \subset \Delta (H)$ and hence $\Delta = \Delta_1 \cup \Delta_2 \subset \Delta (H)$. To verify the reverse inclusion, let $a \in H$ be given.
We choose a factorization  $z \in \mathsf{Z}_H (a)$ with $|z| <  \max \mathsf L_H (a)$ and show that the distance from $|z|$ to the next largest element in $\mathsf L_H (a)$ lies in $\Delta $. More formally, let $d = \min \{ k - |z| \colon k \in \mathsf L_H (a) \ \text{with} \ k>|z| \}$ and we assert that $d \in \Delta$.
We write $z$ in the form
$z=z_1z_2 z_0$, where $z_i \in \mathcal{F}(\mathcal{A}(H_i))$ for $i \in [1,2]$ and  $z_0 \in \mathcal{F}(   \biguplus_{(\boldsymbol m, \ell) \in \Omega'}  P_{\ell}^{\boldsymbol m, \ast} )$, and distinguish two cases.

\smallskip
\noindent
CASE 1:  There exists $(\boldsymbol m, \ell) \in \Omega'$ such that $\prod_{\nu = 1}^{\ell}p_{\ell, \nu}^{\boldsymbol m} \mid z$ in $\mathsf Z (H)$.

We show that  $d = d_1$. If $\ell < \max \mathsf L_{H_1 \time H_2} (\pi (\boldsymbol m) ) -d_1 $, then
 $(\boldsymbol m, \ell + d_1) \in \Omega'$,
and
\[
(\prod_{\nu = 1}^{\ell}p_{\ell, \nu}^{\boldsymbol m})^{-1}z(\prod_{\nu = 1}^{\ell+d_1}p_{\ell+d_1, \nu}^{\boldsymbol m}) \ \in \mathsf Z_H (a)
\]
is a factorization of $a$ of length $|z|+ (\ell+ d_1) -\ell =|z|+d_1$. Since $ \Delta (H) \subset d_1 \N$, it follows that $d=d_1$.

If $\ell   = \max \mathsf L_{H_1 \time H_2} (\pi (\boldsymbol m) ) -d_1 $, then
we choose  $y \in \mathsf{Z}_{H_1\times H_2}(\pi (\boldsymbol m ) )$ with  $|y|= \max \mathsf L_{H_1 \time H_2} (\pi (\boldsymbol m) )$.
Then
\[
(\prod_{\nu = 1}^{\ell}p_{\ell, \nu}^{\boldsymbol m})^{-1}z y \ \in \mathsf Z_H (a)
\]
is a factorization of $a$ of length $ |z|+ d_1$. Since  $ \Delta (H) \subset d_1 \N$, it follows that $d=d_1$.

\smallskip
\noindent
CASE 2: There exists no $(\boldsymbol m, \ell) \in \Omega'$ such that
$\prod_{\nu = 1}^{\ell}p_{\ell, \nu}^{\boldsymbol m} \mid z$ in $\mathsf Z (H)$.

For $i \in [1,2]$ we set $a_i = \pi_{H_i}(z_i)$, and we start with the following assertion.

\begin{enumerate}
\item[{\bf A4.}\,] $\mathsf{Z}_{H} (a_1a_2) z_0 = \mathsf{Z}_H (a)$.
\end{enumerate}

\noindent
{\it Proof of \,{\bf A4}}.\, Obviously, we have $\mathsf{Z}_{H} (a_1a_2) z_0 \subset \mathsf{Z}_H (a)$. To show the reverse inclusion, let
 $z' \in \mathsf{Z}_H (a)$. We write $z'$ in the form  $z' = z_1'z_2'z_0'$ where $z_i' \in \mathcal{F}(\mathcal{A}(H_i))$ for $i \in [1,2]$ and  $z_0' \in \mathcal{F}(   \biguplus_{(\boldsymbol m, \ell) \in \Omega'}  P_{\ell}^{\boldsymbol m, \ast} )$. Clearly, it is sufficient to show  that $z_0 \mid z_0'$.
For each $({\boldsymbol m}, \ell) \in \Omega'$ we define
\[
z(\boldsymbol m, \ell) = \prod_{\nu=1}^{\ell} (p_{\ell, \nu}^{\boldsymbol m})^{k_{\nu}} \quad \text{resp.} \quad z' (\boldsymbol m, \ell) = \prod_{\nu=1}^{\ell} (p_{\ell, \nu}^{\boldsymbol m})^{k_{\nu}'}
\]
where $k_{\nu}$ resp. $k_{\nu}'$ is the multiplicity of $p_{\ell, \nu}^{\boldsymbol m}$ in $z_0$ resp. $z_0'$ for all $\nu \in [1, \ell]$. The assumption of CASE 2 implies that there is a $\nu \in [1, \ell]$ such that $k_{\nu}=0$. We claim that $z(\boldsymbol m, \ell) \t z' (\boldsymbol m, \ell)$ for each $({\boldsymbol m}, \ell) \in \Omega'$. Once this is done, then it follows that $z_0 \mid z_0'$.

Let $({\boldsymbol m}, \ell) \in \Omega'$ and let $\phi^{\boldsymbol m}_{\ell} \colon \mathsf{q}(H)\to \mathsf{q}(\mathcal{F}(P_{\ell}^{\boldsymbol m}))$ denote the canonical projection.
We note that $\phi^{\boldsymbol m}_{\ell}(u)=1$ for $u \in \mathcal{A}(H) \setminus P_{\ell}^{\boldsymbol m, \ast}$ and
$\phi^{\boldsymbol m}_{\ell} (p_{\ell, \nu}^{\boldsymbol m})=p_{\ell, \nu}^{\boldsymbol m}$ for $\nu \in [2, \ell]$  and $\phi^{\boldsymbol m}_{\ell} (p_{\ell, 1}^{\boldsymbol m})=\prod_{\nu=2}^{\ell'} (p_{\ell, \nu}^{\boldsymbol m})^{-1}$.
It follows that
\[
\phi^{\boldsymbol m}_{\ell}(a)=\prod_{\nu=2}^{\ell} (p_{\ell, \nu}^{\boldsymbol m})^{k_{\nu} - k_1} \quad \text{and also}  \quad \phi^{\boldsymbol m}_{\ell} (a) = \prod_{\nu=2}^{\ell} (p_{\ell, \nu}^{\boldsymbol m})^{k_{\nu}'-k_1'} \,.
\]
Consequently $ k_{\nu} - k_1 = k_{\nu}' - k_1' $ for each $\nu \in [2, \ell]$.
It follows that  $k_{\nu}' = k_{\nu} + (k_1'-k_1)$ for each  $\nu \in [1, \ell]$.
Since all $k_{\nu}, k_{\nu}'$ are non-negative and at least one of the $k_{\nu}$ is equal to $0$, it follows that $k_1'-k_1 \ge 0$, and hence $z(\boldsymbol m, \ell) \t z' (\boldsymbol m, \ell)$.  \qed{(Proof of {\bf  A4})}

\smallskip
If $a_2 = 1$, then $ a_1a_2 \in H_1$.  As $H_1 \subset H$ is divisor-closed, it follows that $\Delta ( \mathsf L_H (a)) = \Delta ( \mathsf L_{H_1} (a_1))$ and hence $d \in \Delta_1 \subset \Delta$.
If $a_1 = 1$, then the analogous  argument yields $d \in \Delta_2 \subset \Delta$.
Now we  assume that $a_1\ne 1$,  $a_2\ne 1$, and assert that  $d=d_1$.
If $|z| -|z_0|=|z_1 z_2| =  \max \mathsf{L}_{H}(a_1a_2) $, then $\mathsf{Z}_H(a) = \mathsf{Z}_{H} (a_1a_2) z_0$ and {\bf A1} imply that $|z| = \max \mathsf{L}_H (a)$, a contradiction.
If $|z| -|z_0|=|z_1 z_2| \in [\max \mathsf{L}_{H} (a_1a_2) -d_1, \max \mathsf{L}_{H} (a_1a_2) -1]$, then  $d=d_1$ by the minimality of $d_1$.

It remains to consider the case where $|z_1 z_2| < \max \mathsf{L}_{H}(a_1a_2)  - d_1$.
Let $\boldsymbol{m} =(m_1, \ldots, m_k, n_1,n_2,n_3) \in \N_0^{k+3}$ such that $z_1z_2 = u_1^{m_1} \cdot \ldots \cdot u_k^{m_k}v_1^{n_1}v_2^{n_2}v_3^{n_3}$.
Since $a_1\ne 1$, $a_2\ne1$, and (by {\bf A1})
\[
|\boldsymbol m|= |z_1 z_2| < \max \mathsf{L}_{H}(a_1a_2)  - d_1 = \max \mathsf{L}_{H_1 \time H_2}(a_1a_2)  - d_1 \,,
\]
it follows that $\boldsymbol{m} \in \Omega$.
Let $\boldsymbol{m}' \in \Min (\Omega)$ with  $\boldsymbol{m}' \le \boldsymbol m$, and let $z'$ denote
the respective factorization. Then $z'\mid z_1z_2$ in $\mathsf Z (H)$,  $(\boldsymbol{m}', |z'|+ d_1)\in  \Omega'$, and
\[
z'^{-1} z_1z_2 (\prod_{\nu=1}^{|z'|+ d_1}p_{|z'|+d_1, \nu}^{\boldsymbol m}) \ \in \mathsf Z_H (a_1a_2)
\]
is a factorization of $a_1a_2$ of length $|z_1z_2| + d_1$.  Thus we obtain a factorization of $a$ of length $|z|+ d_1$  which implies $d=d_1$.

\medskip
\noindent
{\it Proof of \,{\bf A3}}.\, Since $H$ is a reduced and finitely generated monoid,  it is sufficient to verify that $H$ is root-closed by \cite[Theorem 2.7.14]{Ge-HK06a}. Let $x \in \mathsf{q}(H)$, say
\[
x = y \prod_{(\boldsymbol m, \ell) \in \Omega'} \prod_{\nu=2}^{\ell} (p_{\ell, \nu}^{\boldsymbol m})^{r_{\boldsymbol m, \ell, \nu}} \ \in \mathsf q (H)
\]
where $y \in \mathsf q (H_1 \time H_2)$ and all exponents $r_{\boldsymbol m, \ell, \nu} \in \Z$. Suppose there is an $m \in \N$ such that $x^m \in H$, say
\[
x^m = b \prod_{(\boldsymbol m, \ell) \in \Omega'} \prod_{\nu=1}^{\ell} (p_{\ell, \nu}^{\boldsymbol m})^{s_{\boldsymbol m, \ell, \nu}} \ \in  H
\]
where $b \in H_1 \time H_2$ and all exponents $s_{\boldsymbol m, \ell, \nu} \in \N_0$; note that here we use $p_{\ell, 1}^{\boldsymbol m}$, too. We have to show that $x \in H$. Clearly, we have, in $\mathsf{q}(H)$,
\[
\begin{aligned}
y^m \prod_{(\boldsymbol m, \ell) \in \Omega'} \prod_{\nu=2}^{\ell} (p_{\ell, \nu}^{\boldsymbol m})^{m r_{\boldsymbol m, \ell, \nu}} \ & = \
b \prod_{(\boldsymbol m, \ell) \in \Omega'} \prod_{\nu=1}^{\ell} (p_{\ell, \nu}^{\boldsymbol m})^{s_{\boldsymbol m, \ell, \nu}} \\
 & = b  \prod_{(\boldsymbol m, \ell) \in \Omega'}    \pi (\boldsymbol m)^{s_{\boldsymbol m, \ell, 1}}  \prod_{(\boldsymbol m, \ell) \in \Omega'} \prod_{\nu=2}^{\ell} (p_{\ell, \nu}^{\boldsymbol m})^{s_{\boldsymbol m, \ell, \nu} - s_{\boldsymbol m, \ell, 1}} \,,
\end{aligned}
\]
where the second equation follows from the fact that $p_{\ell, 1}^{\boldsymbol m} = \pi (\boldsymbol m)(\prod_{\nu=2}^{\ell} p_{\ell, \nu}^{\boldsymbol m})^{-1}$ for all $(\boldsymbol m, \ell) \in \Omega'$.
Since $b^{\ast} := b  \prod_{(\boldsymbol m, \ell) \in \Omega'}    \pi (\boldsymbol m)^{s_{\boldsymbol m, \ell, 1}} \in H_1 \time H_2 \subset \mathsf{q}( H_1 \time H_2)$, it follows that
\begin{equation} \label{wichtig}
m r_{\boldsymbol m, \ell, \nu} =   s_{\boldsymbol m, \ell, \nu} - s_{\boldsymbol m, \ell, 1} \ \text{ for all indices and} \quad  y^m =b^{\ast} \in H_1 \time H_2 \,.
\end{equation}
Since $H_1 \time H_2$ is a Krull monoid and hence root-closed, we infer that $y \in H_1 \time H_2$.
For each $(\boldsymbol m, \ell) \in \Omega'$, we choose an index $\nu' \in [1, \ell]$ such that \[
r_{\boldsymbol m, \ell, \nu'} = \min \{ r_{\boldsymbol m, \ell, \nu} \colon \nu \in [1, \ell] \}\,,
\]
and let $\Omega'' \subset \Omega'$ be the the subset of all $(\boldsymbol m, \ell) \in \Omega'$ for which  $r_{\boldsymbol m, \ell, \nu'} < 0$. We write $x$ as a product of four factors and show that each factor lies in $H$ which implies $x \in H$.
Indeed we have
\begin{align*}
x & = y \prod_{(\boldsymbol m, \ell) \in \Omega'} \prod_{\nu=2}^{\ell} (p_{\ell, \nu}^{\boldsymbol m})^{r_{\boldsymbol m, \ell, \nu}}\\
  & = (y \prod_{(\boldsymbol m, \ell) \in \Omega''}  \pi (\boldsymbol m)^{r_{\boldsymbol m, \ell, \nu'}})
            (\prod_{(\boldsymbol m, \ell) \in \Omega''}  \pi (\boldsymbol m)^{-r_{\boldsymbol m, \ell, \nu'}})
            \prod_{(\boldsymbol m, \ell) \in \Omega'} \prod_{\nu=2}^{\ell} (p_{\ell, \nu}^{\boldsymbol m})^{r_{\boldsymbol m, \ell, \nu}  } \\
   & = (y \prod_{(\boldsymbol m, \ell) \in \Omega''}  \pi (\boldsymbol m)^{r_{\boldsymbol m, \ell, \nu'}})
            (\prod_{(\boldsymbol m, \ell) \in \Omega''} \prod_{\nu=1}^{\ell} (p_{\ell, \nu}^{\boldsymbol m})^{-r_{\boldsymbol m, \ell, \nu'}  })
            \prod_{(\boldsymbol m, \ell) \in \Omega'} \prod_{\nu=2}^{\ell} (p_{\ell, \nu}^{\boldsymbol m})^{r_{\boldsymbol m, \ell, \nu}  }\\
 & = \underbrace{(y \prod_{(\boldsymbol m, \ell) \in \Omega''}  \pi (\boldsymbol m)^{r_{\boldsymbol m, \ell, \nu'}})}_{\text{(i)}}
     \underbrace{(\prod_{(\boldsymbol m, \ell) \in \Omega''}  (p_{\ell, 1}^{\boldsymbol m})^{-r_{\boldsymbol m, \ell, \nu'}})}_{\text{(ii)}}
     \underbrace{\prod_{(\boldsymbol m, \ell) \in \Omega''} \prod_{\nu=2}^{\ell} (p_{\ell, \nu}^{\boldsymbol m})^{r_{\boldsymbol m, \ell, \nu} -r_{\boldsymbol m, \ell, \nu'} }}_{\text{(iii)}}
     \underbrace{\prod_{(\boldsymbol m, \ell) \in \Omega' \setminus \Omega''} \prod_{\nu=2}^{\ell} (p_{\ell, \nu}^{\boldsymbol m})^{r_{\boldsymbol m, \ell, \nu}  }}_{\text{(iv)}}
\end{align*}
We consider each of the four products individually:
\begin{itemize}
\smallskip
\item[(i)] Since  for all indices $s_{\boldsymbol m, \ell, 1} + m r_{\boldsymbol m, \ell, \nu}  =   s_{\boldsymbol m, \ell, \nu} \ge 0 $ by \eqref{wichtig}, we obtain that
\[
\begin{aligned}
(y \prod_{(\boldsymbol m, \ell) \in \Omega''}    \pi (\boldsymbol m)^{r_{\boldsymbol m, \ell, \nu'}})^m & =
b^{\ast } \prod_{(\boldsymbol m, \ell) \in \Omega''}    \pi (\boldsymbol m)^{m r_{\boldsymbol m, \ell, \nu'}}   \\
& =   b \prod_{(\boldsymbol m, \ell) \in \Omega'} \pi (\boldsymbol m)^{s_{\boldsymbol m, \ell, 1}}
\prod_{(\boldsymbol m, \ell) \in \Omega''}    \pi (\boldsymbol m)^{m r_{\boldsymbol m, \ell, \nu'}} \\
& = b \prod_{(\boldsymbol m, \ell) \in \Omega' \setminus \Omega''} \pi (\boldsymbol m)^{s_{\boldsymbol m, \ell, 1}}
\prod_{(\boldsymbol m, \ell) \in \Omega''}    \pi (\boldsymbol m)^{s_{\boldsymbol m, \ell, 1}+m r_{\boldsymbol m, \ell, \nu'}} \in H_1 \time H_2 \,.
\end{aligned}
\]
Since $H_1 \times H_2$ is a Krull monoid and hence root-closed, it follows that
\[
y \prod_{(\boldsymbol m, \ell) \in \Omega''}    \pi (\boldsymbol m)^{r_{\boldsymbol m, \ell, \nu'}} \in H_1 \time H_2 \subset H \,.
\]

\smallskip
\item[(ii)]  This product lies in $H$ since all  $p_{\ell, 1}^{\boldsymbol m}$ are in $H$ and the exponent $-r_{\boldsymbol m, \ell, \nu'}$ is positive.

\smallskip
\item[(iii)]  This product lies in $H$ since all all $p_{\ell, \nu}^{\boldsymbol m}$ are in $H$ and all exponents $r_{\boldsymbol m, \ell, \nu} -r_{\boldsymbol m, \ell, \nu'}$ are non-negative by the minimality of $r_{\boldsymbol m, \ell, \nu'}$.

\smallskip
\item[(iv)] This product lies in $H$ since all $p_{\ell, \nu}^{\boldsymbol m}$ are in $H$ and the exponents  $r_{\boldsymbol m, \ell, \nu} $ are non-negative for all $(\boldsymbol m, \ell) \in \Omega' \setminus \Omega''$   by definition of $\Omega''$.
\end{itemize}
\end{proof}

\bigskip
\begin{proof}[Proof of Corollary \ref{1.2}]
1. Let $\Delta \subset \N$ be a finite nonempty set of positive integers with $\min \Delta = \gcd \Delta$. By Theorem \ref{1.1} there exists a finitely generated Krull monoid $H$ such that $\Delta (H) = \Delta$.
Let $G$ denote the class group of $H$ and let $G_0 \subset G$ be the set of classes containing prime divisors. Since $H$ is a finitely generated monoid, $G$ is a finitely generated abelian group. By \cite[Theorem 3.4.10]{Ge-HK06a}, there is a homomorphism $\boldsymbol \beta \colon H \to \mathcal B (G_0)$ such that $\Delta (H) = \Delta \big( \mathcal B  (G_0) \big)$.
By Claborn's Realization Theorem (\cite[Theorem 3.7.8]{Ge-HK06a}) there is a Dedekind domain $R$ and an isomorphism $\Phi \colon G \to \mathcal C (R)$ such that $G_0$ is mapped onto the subset $G_P$ of the class group $\mathcal C (R)$ which contains prime ideals. Since $R$ is a Dedekind domain, its multiplicative monoid $R^{\bullet} = R \setminus \{0\}$ is a Krull monoid and again by \cite[Theorem 3.4.10]{Ge-HK06a}  there is a homomorphism $\theta \colon R^{\bullet} \to \mathcal B (G_P)$ such that $\Delta (R^{\bullet}) = \Delta \big( \mathcal B (G_P) \big)$. Thus we obtain that
\[
\Delta = \Delta (H) = \Delta \big( \mathcal B  (G_0) \big) = \Delta \big( \mathcal B  (G_P) \big) = \Delta (R^{\bullet}) \,.
\]

\smallskip
2. Let $R$ be a ring,  $\mathcal C$ a class of right $R$-modules which is closed under finite direct sums, under direct summands, and under isomorphisms, and suppose that $\mathcal C$ has a set $V (\mathcal C)$ of representatives. If the endomorphism ring $\End_R (M)$ is semilocal for all modules $M$ in $\mathcal C$, then the monoid $V (\mathcal C)$ is a reduced Krull monoid by a theorem of Facchini (\cite[Theorem 3.4]{Fa02}. Conversely, every reduced Krull monoid is isomorphic to a monoid of modules $V (\mathcal C)$ by a realization theorem of Facchini and Wiegand (\cite[Theorem 2.1]{Fa-Wi04}). Their result together with Theorem \ref{1.1} implies the assertion.
\end{proof}

\providecommand{\bysame}{\leavevmode\hbox to3em{\hrulefill}\thinspace}
\providecommand{\MR}{\relax\ifhmode\unskip\space\fi MR }
\providecommand{\MRhref}[2]{%
  \href{http://www.ams.org/mathscinet-getitem?mr=#1}{#2}
}
\providecommand{\href}[2]{#2}

\end{document}